\documentclass[10pt,letterpaper,conference]{ieeeconf}

\pagenumbering{gobble}

\usepackage[hyperfootnotes=false]{hyperref}
\hypersetup{ 		
    pdfauthor={Tyler Summers},        
    colorlinks=true,        
    linkcolor=NavyBlue,
    urlcolor=NavyBlue,
    citecolor=Mahogany     		
}

\usepackage[pdftex]{graphicx}
\graphicspath{{./fig/}}
\usepackage[usenames,dvipsnames]{color}
\usepackage{epstopdf}
\usepackage{tabularx}
\usepackage{colortbl}
\usepackage{wrapfig}
\usepackage{cite}
\usepackage{booktabs}

	
\usepackage{amsmath,amssymb,mathrsfs,dsfont}
\usepackage{algorithm}	
\usepackage[noend]{algpseudocode}
\usepackage{todonotes}

\newtheorem{lem}{Lemma}
\newtheorem{thm}{Theorem}
\newtheorem{prop}{Proposition}
\newtheorem{cor}{Corollary}
\newtheorem{defi}{Definition}

\newcommand{\R}{\mathbb{R}}

\newcommand{\m}{\mathcal}

\newcommand{\tr}{\mathbf{tr}}

\IEEEoverridecommandlockouts

\title{Performance guarantees for greedy maximization of \\non-submodular controllability metrics}
\author{Tyler Summers and Maryam Kamgarpour\thanks{T. Summers is with the Department of Mechanical Engineering, University of Texas at Dallas. E-mail: tyler.summers@utdallas.edu. M. Kamgarpour is with the Automatic Control Laboratory, ETH Z\"urich, Switzerland. The work of T. Summers was sponsored by the Army Research Office and was accomplished under Grant Number: W911NF-17-1-0058. The work of M. Kamgarpour is gratefully supported by ERC Starting Grant CONENE.} }

\allowdisplaybreaks
\begin{document}
\maketitle

\begin{abstract}
A key problem in emerging complex cyber-physical networks is the design of information and control topologies, including sensor and actuator selection and communication network design. These problems can be posed as combinatorial set function optimization problems to maximize a dynamic performance metric for the network. Some systems and control metrics feature a property called \emph{submodularity}, which allows simple greedy algorithms to obtain provably near-optimal topology designs. However, many important metrics lack submodularity and therefore lack  provable guarantees for using a greedy optimization approach. Here we show that performance guarantees can be obtained for greedy maximization of certain \emph{non-submodular} functions of the controllability and observability Gramians. Our results are based on two key quantities: the \emph{submodularity ratio}, which quantifies how far a set function is from being submodular, and the \emph{curvature}, which quantifies how far a set function is from being supermodular. 
\end{abstract}

\section{Introduction}
Many emerging complex dynamical networks, from critical infrastructure to industrial cyber-physical systems and biological networks, are increasingly able to be instrumented with new sensing, actuation, and communication capabilities. These networks comprise growing webs of interconnected feedback loops and must operate efficiently and resiliently in dynamic and uncertain environments. This motivates addressing fundamental network topology design problems to select the most effective sensors, actuators, and communication links, along with jointly designing the associated estimation, control, and communication policies.

There are a variety of quantitative notions of network controllability and observability to guide topology design in cyber-physical networks. Examples include Kalman rank conditions \cite{liu2011controllability,nepusz2012controlling,ruths2014control,olshevsky2014minimal,pequito2016}, controllability and observability Gramians \cite{Pasqualetti2014c,summers2014optimal,summers2014submodularity,tzoumas2016,yannature2015,zhao2016scheduling,nozari2016time}, and optimal and robust feedback control and estimation performance metrics \cite{Polyak-LMI_sparse_fb,Dhingra2014,munz2014sensor,Argha2016,summers2016actuator,zhang2017sensor,Taylor2017,Taha2017}. Topology design problems have also been considered for specific classes of networks, including leader selection and communication network design \cite{clark2016submodularity,summers2017information}. Various optimization methods have been proposed for topology design, including greedy algorithms \cite{tzoumas2016,zhang2017sensor,summers2014submodularity,summers2016actuator,clark2016submodularity}, convex relaxation heuristics with sparsity inducing regularization \cite{Polyak-LMI_sparse_fb,Dhingra2014,munz2014sensor,Argha2016,summers2016convex}, and mixed-integer semidefinite programming methods\cite{Taylor2017,Taha2017}. These methods are all heuristic approximations to extremely difficult combinatorial optimization problems.

A key combinatorial property in network topology design problems is \emph{submodularity}; simple greedy algorithms have theoretical performance guarantees for submodular set function maximization problems. Submodularity has recently been discovered in several network topology design problems in systems and control, including  certain Gramian metrics and leader selection problems~\cite{summers2014submodularity,tzoumas2016,clark2016submodularity}, paralleling similar development for information optimization problems in machine learning. However, many other important topology design problems lack submodularity \cite{summers2016actuator,summers2017information}. Nevertheless, greedy algorithms were shown to be highly effective empirically for many non-submodular optimal control problems, despite the apparent lack of theoretical guarantees.

Here, we show that theoretical performance guarantees for greedy algorithms can be obtained for certain \emph{non-submodular} set function optimization problems in systems and control. Our results are based upon recent fundamental work \cite{bian2017guarantees} that generalizes classical optimality bounds for submodular set functions. This work utilizes two key quantities: the \emph{submodularity ratio}, which quantifies how far a set function is from being submodular, and the \emph{curvature}, which quantifies how far a set function is from being modular. We focus on two \emph{non-submodular} Gramian-based metrics for controllability and observability: the minimum eigenvalue and negative trace of the inverse. Specifically, we derive general bounds on the submodularity ratio and the curvature of these two functions based on eigenvalue inequalities for sums of symmetric matrices. The existence of these bounds support the use of the greedy algorithm beyond a heuristic for a much wider class of network topology design problems. 

Our preliminary results appeared in an unpublished manuscript in \cite{summers2017kamgarpour}. Afterwards, two recent  works also utilized approximate submodularity and supermodularity notions for network control design problems \cite{gupta2018approximate,tzoumas2018control}. Our work has been developed independently and prior to the above and our approach complements them as follows. First, \cite{gupta2018approximate} derive bounds by comparison of the non-submodular functions with a ``close" submodular one. Second, \cite{tzoumas2018control} considers an objective function different than ours, arising in co-design of control and estimation for a network system. 

The rest of the paper is organized as follows. Section II provides preliminaries on set function optimization and Gramian-based controllability metrics. Section III develops our results on bounds on the submodularity ratio and curvature for certain non-submodular functions of the controllability Gramian, leading to performance guarantees from the greedy algorithm. Section IV discusses the predictive power of the theory with case studies. We conclude in Section V.

\section{Preliminaries}

\subsection{Set functions and submodularity}
Network topology design problems can be formulated as cardinality constrained set function optimization problems
\begin{equation} \label{optprob}
 \underset{{S \subseteq V, \ |S| \leq k }}{\text{maximize}} \quad f(S),
\end{equation}
 where $V = \{1,...,M \}$ is a finite set,  $f: 2^V \rightarrow \mathbf{R}$ is a set function that maps each subset of $V$ to a real number, and $k$ denotes a fixed number of elements to be selected from $V$. These problems are combinatorial and finite, and thus can be solved in principle  by exhaustive search. However, this approach quickly becomes intractable even for moderately sized problems. The motivating context of large cyber-physical networks requires a different approach.

Greedy algorithms are a simple alternative to exhaustive search. The greedy algorithm for set function maximization is shown in Algorithm \ref{algorithm:greedy}. 
\begin{algorithm}[b] 
\caption{The greedy algorithm.}
\label{algorithm:greedy}
\begin{algorithmic}
\State $S \leftarrow \emptyset$
\While {$|S| \leq k$}
\State $e^\star = \mathop{\mbox{argmax}}\limits_{e\in V\setminus S } \quad f(S\cup\{e\}) - f(S)$
\State $S \leftarrow S \cup \{e^\star\}$
\EndWhile
\State $S^\star \leftarrow S$
\end{algorithmic}
\end{algorithm}
When a set function maximization problem has a certain property called \emph{submodularity}, the greedy algorithm achieves results that are provably within a constant factor of the optimal value. 
\begin{defi} \label{submoddef}
A set function $f: 2^V \rightarrow \mathbf{R}$ is called \emph{submodular} if for all subsets $A \subseteq B \subseteq V$ and all elements $s \notin B$, it holds that
\begin{equation} \label{submod1}
f(A \cup \{s\}) - f(A) \geq f(B \cup \{s\}) - f(B).
\end{equation}
A set function is \emph{supermodular} if the reversed inequality in (\ref{submod1}) holds and is \emph{modular} if  (\ref{submod1}) holds with equality. A set function is \emph{monotone nondecreasing} if $\forall A \subset V, s \in V$, $f(A \cup \{s \}) \geq f(A)$. A set function is  \emph{normalized} if $f(\emptyset) = 0$. 
\end{defi}

Intuitively, submodularity is a diminishing returns property, that is, adding an element to a smaller set gives a larger benefit than adding it to a larger set. This intuition is utilized and quantified to derive constant-factor approximation guarantees for the greedy algorithm applied to submodular maximization problems subject to cardinality constraints.

\begin{thm}[Nemhauser 1978 \cite{nemhauser1978analysis}] \label{greedyboundthm}
Let $f^*$ be the optimal value of the set function optimization problem \eqref{optprob}, and let $f(S_{greedy})$ be the value associated with the subset $S_{greedy}$ obtained from applying the greedy algorithm on \eqref{optprob}. If $f$ is submodular nondecreasing, then
\begin{equation}
	f(S_{greedy}) \geq \left(1 - \frac{1}{e} \right) f^*
\label{eq:greedy_bound}\end{equation}
\end{thm}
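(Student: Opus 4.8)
The plan is to follow the classical telescoping argument, tracking the greedy iterates and showing that the optimality gap contracts by a constant factor at each step. Write $S_0 = \emptyset \subseteq S_1 \subseteq \cdots \subseteq S_k = S_{greedy}$ for the sets produced after each iteration, and let $S^* \subseteq V$ with $|S^*| \le k$ be an optimal set, so $f(S^*) = f^*$. As is standard for this bound, take $f$ to be normalized, $f(\emptyset) = 0$; together with the nondecreasing hypothesis this gives $f(S) \ge 0$ for every $S$. The quantity I would track is the remaining gap $\delta_i := f^* - f(S_i)$, and the goal is to prove $\delta_{i+1} \le (1 - 1/k)\,\delta_i$.

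The crux is the per-step inequality $f^* - f(S_i) \le k\,[\,f(S_{i+1}) - f(S_i)\,]$, which I would establish in three moves. First, by monotonicity, $f^* = f(S^*) \le f(S^* \cup S_i)$, so it suffices to bound $f(S^* \cup S_i) - f(S_i)$. Second, enumerating $S^* \setminus S_i = \{e_1,\dots,e_m\}$ with $m \le k$, I would expand this difference as a telescoping sum of marginal gains $f(S_i \cup \{e_1,\dots,e_j\}) - f(S_i \cup \{e_1,\dots,e_{j-1}\})$ and apply submodularity termwise to bound each by the single-element marginal $f(S_i \cup \{e_j\}) - f(S_i)$. Third, each single-element marginal is at most the gain $f(S_{i+1}) - f(S_i)$ achieved by the greedy choice, by the definition of the $\mathrm{argmax}$ step in Algorithm~\ref{algorithm:greedy}. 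Summing the $m \le k$ terms yields the claimed bound (here nonnegativity of the marginals, from monotonicity, is what lets me inflate $m$ up to $k$ harmlessly).

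Rearranging the per-step inequality gives $\delta_i - \delta_{i+1} = f(S_{i+1}) - f(S_i) \ge \delta_i / k$, i.e. $\delta_{i+1} \le (1 - 1/k)\,\delta_i$. Iterating over the $k$ greedy steps and using $\delta_0 = f^* - f(\emptyset) = f^*$ gives $\delta_k \le (1 - 1/k)^k f^*$. Finally, the elementary bound $(1 - 1/k)^k \le e^{-1}$ yields $f^* - f(S_{greedy}) = \delta_k \le f^*/e$, which rearranges to \eqref{eq:greedy_bound}.

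I expect the main obstacle to be the second move of the crux, namely the correct bookkeeping in the telescoping decomposition and the termwise application of submodularity: one must add the elements of $S^* \setminus S_i$ in a fixed order and apply inequality \eqref{submod1} with $A = S_i \cup \{e_1,\dots,e_{j-1}\}$ and $B$ equal to the full intermediate set, checking the hypotheses $A \subseteq B$ and $e_j \notin B$ at each step. The remaining ingredients --- monotonicity, greedy optimality, and the convergence of $(1-1/k)^k$ --- are routine once this decomposition is set up cleanly.
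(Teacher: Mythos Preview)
The paper does not prove this theorem at all; it is quoted as the classical 1978 result of Nemhauser et al.\ and used as background. Your argument is exactly the standard telescoping/contraction proof from that reference, and it is correct.

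One small cleanup: in your final paragraph the roles of $A$ and $B$ are stated backwards. To obtain
\[
f(S_i \cup \{e_1,\dots,e_j\}) - f(S_i \cup \{e_1,\dots,e_{j-1}\}) \;\le\; f(S_i \cup \{e_j\}) - f(S_i)
\]
from \eqref{submod1}, you want $A = S_i$ and $B = S_i \cup \{e_1,\dots,e_{j-1}\}$ with $s = e_j$, not $A$ equal to the intermediate set. Your second paragraph already applies this correctly, so this is only a labeling slip in the closing commentary, not a flaw in the proof.
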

The above theorem has rendered the greedy approach an algorithm of choice for several challenging combinatorial optimization problems\footnote{This bound can be refined by explicitly considering the number of elements $k$ to be chosen or including further properties of  $f$.}.

Several problems in systems and control that feature sub- or supermodularity have been recently explored \cite{summers2014submodularity,clark2016submodularity,tzoumas2016}. However, a large class of important set function optimization in network topology design fail to be sub- or supermodular \cite{summers2016actuator,summers2017submodularitycorrection,summers2017information}. It has been observed that by quantifying how close a function to  being sub- or supermodular is, one can derive constant factor optimization for the greedy approach. This ``closeness" is with respect to two notions defined below. 

Let $\rho_A(B) := f(A \cup B)- f(B)$ denote the marginal benefit of the set $A \subset V$ with respect to the set $B\subset V$. For notational compactness, we use $\omega$ interchangeably with $\{\omega\}$ when considering a singleton subset of $V$.
\begin{defi} \label{sratio}
The \emph{submodularity ratio} of a nonnegative set function $f$ is the largest $\gamma \in \R_+$ such that 
\begin{align}
\label{def:gamma}
\sum_{\omega \in \Omega \setminus S} \rho_\omega(S) \, \geq \, \gamma \rho_\Omega(S),
 \quad \forall \Omega, S \subseteq V.
\end{align} 
\end{defi}
\begin{defi} \label{curvature}
The \emph{curvature} of a nonnegative set function $f$ is the smallest $\alpha \in \R_+$ such that 
\begin{align}
\label{def:alpha}
 \rho_j(S\setminus j \cup \Omega) \, \geq\, &(1-\alpha) \rho_j(S\setminus j), \forall \Omega, S \subseteq V, \forall j \in S \setminus \Omega.
\end{align} 
\end{defi}
For a nondecreasing function $\gamma \in [0,1]$, and $\gamma = 1$ if and only if $f$ is submodular. The  curvature $\alpha$ of a nondecreasing function is contained in $[0,1]$, and $\alpha = 0$ if and only if $f$ is supermodular \cite{bian2017guarantees}. The following recent result \cite{bian2017guarantees} generalizes Theorem \ref{greedyboundthm} and  provides performance guarantees for greedy maximization of \emph{non-submodular} functions based on the submodularity ratio and  curvature.
\begin{thm}[\cite{bian2017guarantees}] \label{nonsubmodthm}
Let $f$ be a nonnegative nondecreasing normalized set function with submodularity ratio $\gamma \in [0,1]$ and curvature $\alpha \in [0,1]$. Then, Algorithm \ref{algorithm:greedy} on problem \eqref{optprob} enjoys the following approximation guarantee:
$$ f(S_{greedy}) \geq \frac{1}{\alpha} \left( 1 - e^{-\alpha \gamma} \right) f^*.$$
\end{thm}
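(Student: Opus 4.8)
The plan is to mimic the classical Nemhauser-type analysis but to carry the two approximation parameters $\gamma$ and $\alpha$ through the per-step progress argument. Write the greedy iterates as $\emptyset = S_0 \subseteq S_1 \subseteq \cdots \subseteq S_k = S_{greedy}$, where $S_{i+1} = S_i \cup \{e_{i+1}\}$ and $e_{i+1}$ maximizes the marginal gain $\rho_e(S_i)$, and let $O$ denote an optimal set with $f(O) = f^*$ and $|O| \le k$. The goal is to establish a one-step recursion of the form
\[
f(S_{i+1}) - f(S_i) \;\ge\; \frac{\gamma}{k}\bigl(f^* - \alpha f(S_i)\bigr),
\]
since a recursion of this shape has $f^*/\alpha$ as its fixed point and, once solved, yields exactly the claimed factor $\tfrac1\alpha(1-e^{-\alpha\gamma})$.

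I would obtain this recursion from two separate estimates. First, the submodularity ratio controls the shortfall of the greedy gain against the optimal elements. Because $e_{i+1}$ is the maximizer and $|O\setminus S_i|\le k$, greedy maximality gives $f(S_{i+1})-f(S_i) = \max_e \rho_e(S_i) \ge \frac{1}{k}\sum_{\omega\in O\setminus S_i}\rho_\omega(S_i)$, and Definition \ref{sratio} applied with $\Omega = O$, $S = S_i$ lower-bounds the right-hand sum by $\gamma\,\rho_O(S_i) = \gamma\bigl(f(O\cup S_i)-f(S_i)\bigr)$. Hence $f(S_{i+1})-f(S_i)\ge \frac{\gamma}{k}\bigl(f(O\cup S_i)-f(S_i)\bigr)$, which isolates the quantity $f(O\cup S_i)$ to be handled next.

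The second, and I expect harder, estimate uses the curvature to compare $f(O\cup S_i)$ with $f^*=f(O)$. Writing $f(O\cup S_i)-f(O)$ as a telescoping sum of the marginal gains of the elements of $S_i$ added on top of $O$ in greedy order, I would apply Definition \ref{curvature} term by term --- with $j = e_l$, $S\setminus j$ the greedy prefix, and $\Omega = O$ --- to replace each gain $\rho_{e_l}((S_l\setminus e_l)\cup O)$ by $(1-\alpha)\rho_{e_l}(S_l\setminus e_l)$, whose telescoped sum reconstructs $(1-\alpha)f(S_i)$. The delicate point, and the main obstacle, is that the curvature inequality is only valid for indices $j = e_l \in S\setminus\Omega$, i.e.\ for greedy elements \emph{not} already in $O$: the terms corresponding to $e_l\in O\cap S_i$ contribute zero to $f(O\cup S_i)-f(O)$ yet carry a nonnegative weight in $(1-\alpha)f(S_i)$, so the overlap $O\cap S_i$ must be accounted for carefully rather than discarded. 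Handling this overlap correctly (together with checking the boundary case $\alpha\to0$, where the bound should degenerate to $\gamma f^*$) is where the real work lies; once it is dispatched one arrives at $f(O\cup S_i)\ge f(O)+(1-\alpha)f(S_i)$, equivalently $f(O\cup S_i)-f(S_i)\ge f^*-\alpha f(S_i)$.

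Combining the two estimates gives the target recursion. To finish, I would introduce the deficit $u_i := f^*/\alpha - f(S_i)$ and rewrite the recursion as $u_{i+1}\le\bigl(1-\tfrac{\alpha\gamma}{k}\bigr)u_i$, a plain geometric contraction. Iterating from $u_0 = f^*/\alpha$ (using $f(\emptyset)=0$ by normalization) and applying the elementary inequality $(1-\tfrac{\alpha\gamma}{k})^k\le e^{-\alpha\gamma}$ yields $f^*/\alpha - f(S_k)\le e^{-\alpha\gamma} f^*/\alpha$, which rearranges to $f(S_{greedy})\ge \tfrac1\alpha(1-e^{-\alpha\gamma})f^*$, as claimed.
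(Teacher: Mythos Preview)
The paper does not prove Theorem~\ref{nonsubmodthm}; it is quoted from \cite{bian2017guarantees} and used as a black box, so there is no in-paper argument to compare your proposal against.

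That said, your sketch contains a genuine gap at the crucial step. The intermediate inequality you aim for, $f(O\cup S_i)\ge f(O)+(1-\alpha)f(S_i)$, is false in general, and correspondingly the per-step recursion $f(S_{i+1})-f(S_i)\ge \frac{\gamma}{k}\bigl(f^*-\alpha f(S_i)\bigr)$ does not hold. Take $f$ modular, so $\alpha=0$ and $\gamma=1$: your recursion would demand that every greedy increment be at least $f^*/k$, i.e.\ at least the \emph{average} of the top-$k$ singleton values, which clearly fails once the singleton values are unequal (e.g.\ values $10,1$ with $k=2$: the second greedy gain is $1<11/2$). Equivalently, for modular $f$ one has $f(O\cup S_i)=f(O)+f(S_i)-f(O\cap S_i)$, so your target inequality with $\alpha=0$ would force $f(O\cap S_i)\le 0$, contradicting nonnegativity whenever the overlap is nonempty --- and with greedy on a modular function the overlap is \emph{always} nonempty for $i\ge 1$. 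You correctly flagged the overlap $O\cap S_i$ as ``where the real work lies,'' but the resolution cannot be the clean inequality you wrote down; the actual argument in \cite{bian2017guarantees} tracks the overlap terms through the entire induction rather than absorbing them into a single per-step bound of this form.
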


It is intractable to compute the submodularity ratio and curvature for a given set function due to the combinatorial number of constraints (of order of $2^{ 2 | V |}$) in \eqref{def:gamma} and \eqref{def:alpha}, respectively (similar to the challenge in exhaustive search for solving Problem \eqref{optprob}). {However,  a positive lowerbound on the submodularity ratio and an upperbound on the curvature for a given $f$, justify the use of the greedy algorithm for Problem \eqref{optprob} via Theorem \ref{nonsubmodthm}}. Our goal is to derive such bounds for $f$'s corresponding to non-submodular controllability metrics.

\subsection{Gramian-based performance metrics}
Consider the linear  system describing network dynamics
\begin{equation} \label{dtdlds}
\dot{x}(t) = A x(t) + B_S u(t), \quad t = 0,...,T,
\end{equation}
where $x(t) \in \mathbf{R}^n$ is the network state at time $t$, $u(t) \in \mathbf{R}^{ |S| }$ is the input at time $t$. Let $V = \{ b_1,..., b_M \}$ be a finite set of $n$-dimensional column vectors associated with possible locations for actuators that could be placed in the system, i.e., for  $S \subset V$, the input matrix is $b_S = [b_{s_1}, ..., b_{s_{|S|}}] \in \mathbf{R}^{n \times |S|}$. 
We focus on the network topology design problem of selecting a set of actuators to optimize certain metrics of the controllability Gramian. (Analogous results follow for sensor selection based on the observability Gramian.) 

The infinite-horizon controllability Gramian associated with a subset $S\subset V$ of actuators is the symmetric positive semidefinite matrix  $W_S$, satisfying
\begin{equation} \label{gramint}
W_S = \int_0^\infty e^{A \tau} b_S b_S^T e^{A^T \tau} d\tau \ \in \mathbf{R}^{n\times n}.
\end{equation}
To ensure the Gramian is well-defined we assume $A$ is stable. 
Observe that $W_{S \cup \Omega} = W_{S} + W_{\Omega \setminus S}$, $\forall S, \Omega \subset V$. This additive dependence on the actuators  is key in deriving  properties of several performance metrics based on the Gramian. 

The quantity $x^T W_S^{-1} x$ is the amount of input energy required to transfer the network state from the origin to the state $x$.  As such, we can define the following scalar metrics of the matrix $W_S$, each of which defines a different set function that provides a basis for actuator selection. 
\begin{itemize}
\item $f(S) = \mathbf{tr}(W_S)$ (modular): This metric is inversely related to the average input energy and can be interpreted as the average controllability in all directions in the state space. It also quantifies the system $\mathcal{H}_2$ norm.
\item$f(S) = \log \det W_S$ (submodular): This  is a volumetric measure of the set of states that
can be reached with one unit of input energy. Note that even if trace of the Gramian is large the volume can be small, due to  reachability only in certain state space directions. 
\item $f(S) = \mathbf{rank}(W_S)$ (submodular): This metric captures the dimension of controllable subspace.
\item $f(S) = \lambda_{min} (W_S)$ (\emph{not} submodular): This metric captures the energy for  directions that are hard to control. 
\item $f(S) = -\mathbf{tr}(W_S^{-1})$ (\emph{not} submodular): This metric captures the average energy required to reach any arbitrary direction of the state space. 
\end{itemize}
The choice of a specific metric from above is  dependent on the application at hand (see \cite{summers2014submodularity} for discussions). Along with each choice, one is  faced with the problem of selecting actuators to optimize the metric. This selection is a combinatorial problem and hence, quickly becomes intractable as the network size increases. Past work used submodularity to support empirical evidence on effectiveness of the greedy algorithm for the first three metrics \cite{summers2014submodularity,summers2017submodularitycorrection,clark2017submodularity}. It was shown that the last two metrics  $\lambda_{min} (W_S)$ and $-\mathbf{tr}(W_S^{-1})$, are \emph{not} submodular \cite{summers2014submodularity,summers2017submodularitycorrection}. These two metrics are important since they refer to the worst-case and the average energy respectively, required to steer the system. 
However, the empirical performance  of the greedy optimization were not supported theoretically for these cases. 

\section{Performance guarantees for non-submodular Gramian metrics}
To support the use of greedy algorithm beyond a heuristic approach for maximizing $-\mathbf{tr}(W_S^{-1})$ and $\lambda_{min} (W_S)$, we will derive positive lowerbounds on the submodularity ratio and upperbounds on the curvature of these functions. 
Let $\m{S}^n, \m{S}^n_+, \m{S}^n_{++}$ denote the set of $n$-dimensional symmetric, symmetric positive semidefinite, and symmetric positive definite matrices, respectively. For $M \in \m{S}^n$, let $\lambda_1(M) \geq \lambda_2(M) \geq \dots \geq \lambda_n(M) $ denote the eigenvalues of the matrix $M$. Let $I_n$ denote the identity matrix of dimension $n$. 

Let us recall Weyl's inequalities for eigenvalues of sum of two symmetric matrices \cite{weyl1912asymptotische}.
\begin{lem} 
\label{Weyl}
Let $A, B \in \m{S}^n$. 
\begin{align}
\label{eq:evals_sum}
\lambda_i(A) + \lambda_n(B) \leq \lambda_i(A + B) \leq \lambda_i(A) + \lambda_1(B). 
\end{align}
\end{lem}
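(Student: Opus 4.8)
The plan is to prove both inequalities in \eqref{eq:evals_sum} through the Courant--Fischer variational (min--max) characterization of the eigenvalues of a symmetric matrix. With the decreasing ordering $\lambda_1 \geq \dots \geq \lambda_n$ adopted here, for any $M \in \m{S}^n$ and any index $i$ one has the max--min representation
\begin{equation}
\lambda_i(M) = \max_{\substack{U \subseteq \R^n \\ \dim U = i}} \ \min_{\substack{x \in U \\ \|x\| = 1}} \ x^T M x,
\end{equation}
where the outer maximization ranges over all $i$-dimensional subspaces $U$. I would take this representation as given, since it follows from the spectral theorem and is the standard tool for bounding eigenvalues of sums.

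For the upper bound, the key observation is the Rayleigh-quotient estimate $x^T B x \leq \lambda_1(B)$, valid for every unit vector $x$, which is just the above characterization applied to $B$ with $i = 1$. Fixing an arbitrary $i$-dimensional subspace $U$ and splitting $x^T(A+B)x = x^T A x + x^T B x$, I would bound the second term by $\lambda_1(B)$ uniformly in $x$, so that $\min_{x \in U, \|x\|=1} x^T(A+B)x \leq \lambda_1(B) + \min_{x \in U, \|x\|=1} x^T A x$. Taking the maximum over all $i$-dimensional $U$ and invoking the characterization once more then yields $\lambda_i(A+B) \leq \lambda_i(A) + \lambda_1(B)$.

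The lower bound can be obtained symmetrically, replacing the estimate above by $x^T B x \geq \lambda_n(B)$ for unit $x$ and reversing the inequalities, giving $\lambda_i(A+B) \geq \lambda_i(A) + \lambda_n(B)$. Alternatively, and more economically, it follows from the upper bound alone by substitution: writing $A = (A+B) + (-B)$ and applying the already-proved upper inequality gives $\lambda_i(A) \leq \lambda_i(A+B) + \lambda_1(-B)$, and since $\lambda_1(-B) = -\lambda_n(B)$, rearranging recovers the left-hand inequality in \eqref{eq:evals_sum}.

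There is no substantial obstacle here: the result is classical and the entire argument rests on the Courant--Fischer characterization, so the only real content is recalling that characterization correctly and keeping the decreasing eigenvalue ordering consistent throughout, in particular matching $\lambda_i$ to an $i$-dimensional subspace in the max--min form and remembering that negation reverses the order, $\lambda_1(-B) = -\lambda_n(B)$. The single point requiring minor care is justifying that the uniform bound passes through the inner minimization, which is immediate because $\lambda_1(B)$ and $\lambda_n(B)$ are constants independent of $x$.
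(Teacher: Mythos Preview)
Your argument is correct and is the standard Courant--Fischer proof of Weyl's inequalities. The paper itself does not prove this lemma at all: it simply states the inequalities and attributes them to Weyl \cite{weyl1912asymptotische}, treating the result as classical. So there is nothing to compare against; your proposal supplies a valid proof where the paper offers only a citation.
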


\subsection{Minimum eigenvalue of the Gramian}
\begin{prop}
\label{lambda_min}
The function $f(S)=\lambda_{n}(W_S)$ is nonnegative  nondecreasing and normalized. The submodularity ratio $\gamma$, and the curvature $\alpha$, of this function  are bounded by
$$ \gamma \geq \frac{ \min_{\omega \in V} \lambda_n(W_\omega)}{ \max_{\omega \in V} \lambda_1(W_\omega)}, \quad \alpha \leq 1-\frac{ \min_{\omega \in V} \lambda_n(W_\omega)}{ \max_{\omega \in V} \lambda_1(W_\omega)}.$$ 
\end{prop}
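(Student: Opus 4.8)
The plan is to exploit the additive structure $W_{S\cup\Omega}=W_S+W_{\Omega\setminus S}$ together with Weyl's inequalities (Lemma~\ref{Weyl}) applied to the positive semidefinite Gramian increments. The three structural properties follow directly: $W_S\succeq 0$ gives $f(S)=\lambda_n(W_S)\ge 0$; the empty selection yields $W_\emptyset=0$, so $f(\emptyset)=0$ and $f$ is normalized; and since $W_{S\cup\{s\}}=W_S+W_{\{s\}\setminus S}$ with $W_{\{s\}\setminus S}\succeq 0$, the lower Weyl bound $\lambda_n(W_S+W_{\{s\}\setminus S})\ge \lambda_n(W_S)+\lambda_n(W_{\{s\}\setminus S})\ge\lambda_n(W_S)$ shows $f$ is nondecreasing.

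For the submodularity ratio, first I would bound the marginal of a single actuator $\omega\notin S$. Writing $\rho_\omega(S)=\lambda_n(W_S+W_\omega)-\lambda_n(W_S)$ and applying both sides of Weyl gives $\lambda_n(W_\omega)\le\rho_\omega(S)\le\lambda_1(W_\omega)$. Next I would bound the joint marginal $\rho_\Omega(S)=\lambda_n(W_S+W_{\Omega\setminus S})-\lambda_n(W_S)$ from above by the upper Weyl inequality, $\rho_\Omega(S)\le\lambda_1(W_{\Omega\setminus S})$, and then use subadditivity of the largest eigenvalue (Weyl with $i=1$, applied iteratively to $W_{\Omega\setminus S}=\sum_{\omega\in\Omega\setminus S}W_\omega$) to get $\rho_\Omega(S)\le\sum_{\omega\in\Omega\setminus S}\lambda_1(W_\omega)$. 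Combining the per-element lower bound $\sum_{\omega\in\Omega\setminus S}\rho_\omega(S)\ge\sum_{\omega\in\Omega\setminus S}\lambda_n(W_\omega)$ with this upper bound on $\rho_\Omega(S)$, and replacing each numerator term by $\min_{\omega\in V}\lambda_n(W_\omega)$ and each denominator term by $\max_{\omega\in V}\lambda_1(W_\omega)$, yields the stated bound on $\gamma$ after the common factor $|\Omega\setminus S|$ cancels.

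For the curvature, set $T=S\setminus j$; the hypothesis $j\in S\setminus\Omega$ guarantees $j\notin T\cup\Omega$, so both relevant increments are single-actuator additions. I would lower-bound the numerator $\rho_j(T\cup\Omega)=\lambda_n(W_{T\cup\Omega}+W_j)-\lambda_n(W_{T\cup\Omega})\ge\lambda_n(W_j)$ and upper-bound the denominator $\rho_j(T)=\lambda_n(W_T+W_j)-\lambda_n(W_T)\le\lambda_1(W_j)$, again by the two sides of Weyl. Hence $\rho_j(T\cup\Omega)/\rho_j(T)\ge\lambda_n(W_j)/\lambda_1(W_j)\ge\min_{\omega\in V}\lambda_n(W_\omega)/\max_{\omega\in V}\lambda_1(W_\omega)$; matching this against $1-\alpha$ in Definition~\ref{curvature} gives the claimed upper bound on $\alpha$.

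The \textbf{main obstacle} is the joint marginal $\rho_\Omega(S)$ appearing in the submodularity ratio: unlike the curvature, where only single-element increments occur, here $\Omega$ may be an arbitrary subset, so the crucial step is controlling $\lambda_1$ of the sum $W_{\Omega\setminus S}$ by the sum of the individual $\lambda_1(W_\omega)$ through subadditivity of the top eigenvalue. A secondary point requiring care is the division by $\rho_j(T)$ and $\rho_\Omega(S)$: the ratio bounds are meaningful precisely when $\min_{\omega\in V}\lambda_n(W_\omega)>0$, which forces these marginals to be strictly positive; if this quantity vanishes, the conclusions $\gamma\ge 0$ and $\alpha\le 1$ still hold vacuously.
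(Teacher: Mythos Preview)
Your proposal is correct and follows essentially the same route as the paper: both arguments use the additive decomposition $W_{S\cup\Omega}=W_S+W_{\Omega\setminus S}$ together with the two-sided Weyl bounds to sandwich single-element increments by $\lambda_n(W_\omega)$ and $\lambda_1(W_\omega)$, and then control the joint increment $\rho_\Omega(S)$ via $\lambda_1(W_{\Omega\setminus S})\le |\Omega\setminus S|\max_{\omega\in V}\lambda_1(W_\omega)$ (your explicit subadditivity step is exactly what the paper uses implicitly). Your intermediate curvature bound $\lambda_n(W_j)/\lambda_1(W_j)$ is even marginally sharper before passing to the uniform $\min/\max$, and your remark on the vacuous case $\min_\omega\lambda_n(W_\omega)=0$ matches the paper's post-proof discussion.
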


\begin{proof}
The nonnegativity of $\lambda_{n}(W_S)$ follows from the Gramian being positive semidefinite. The fact that it is nondecreasing follows from the eigenvalue inequality in \eqref{eq:evals_sum} and from nonnegativity of $\lambda_n(W_S)$. In particular, note that for  $\forall S \subset V$, $\forall \omega \in V$, $W_{S \cup \omega} = W_S + W_{\omega}$. Hence, $\lambda_n(W_{S \cup \omega}) \geq \lambda_n(W_S) + \lambda_n(W_{\omega}) \geq \lambda_n(W_S)$. It is easy to see that $\lambda_n(W_\emptyset) = 0$ and hence this function is normalized. 

To bound the submodularity ratio, we first derive a lowerbound for the left-hand-side in \eqref{def:gamma}. Note that $\forall S, \Omega \subset V$
\begin{align*}
\sum_{\omega \in \Omega \setminus S} \rho_\omega(S) =&\sum_{\omega \in \Omega \setminus S} f(S \cup \{\omega\})- f(S)\\
=&\sum_{\omega \in \Omega \setminus S} \lambda_n(W_{S \cup \omega})-\lambda_n(W_S ) \\
 \geq&  \sum_{\omega \in \Omega \setminus S}  \lambda_n(W_S) + \lambda_n(W_\omega) - \lambda_n(W_S)\\
=&\sum_{\omega \in \Omega \setminus S} \lambda_n(W_{\omega})
\geq | \Omega\setminus S | \min_{\omega \in V} \lambda_n(W_\omega),
\end{align*}
where in the first inequality we used Lemma \ref{Weyl}. Next, for the right-hand-side we have $\forall S, \Omega \subset V$
\begin{align*}
 \rho_\Omega(S) =&f(S \cup \Omega)- f(S)\\
=&\lambda_n(W_{S \cup \Omega})-\lambda_n(W_S ) \\
\leq& \lambda_n(W_S)  + \lambda_1(W_{\Omega\setminus S}) - \lambda_n(W_S)
\\=& \lambda_1(W_{\Omega\setminus S}) 
\leq | \Omega\setminus S | \max_{\omega \in V} \lambda_1(W_\omega),
\end{align*}
where in the first inequality we also used Lemma \ref{Weyl}. Putting the above two inequalities together yields 
\begin{align*}
\gamma \geq \frac{ \min_{\omega \in V} \lambda_n(W_\omega)}{ \max_{\omega \in V} \lambda_1(W_\omega)}.
\end{align*}

We similarly bound the curvature. Note that for the left-hand-side in \eqref{def:alpha}, we have $\forall S, \Omega \subset V, j \in S \setminus \Omega$
\begin{align*}
 \rho_j(S\setminus j \; \cup \Omega) =&\lambda_n(W_{S \cup \Omega})-\lambda_n(W_{S\setminus j \;  \cup \Omega }) \\
\geq& \lambda_n(W_{S\setminus j \;  \cup \Omega })   + \lambda_n(W_{j}) - \lambda_n(W_{S\setminus j \;  \cup \Omega })\\
=& \lambda_n(W_{j}) 
\geq  \min_{j \in V} \lambda_n(W_i).
\end{align*}
Next, for the right-hand-side we have $\forall S, \Omega \subset V, j \in S \setminus \Omega$
\begin{align*}
\rho_j(S\setminus j)&= \lambda_n(W_{S})-\lambda_n(W_{S\setminus j }) \\
\leq & \lambda_n(W_{S\setminus j }) + \lambda_1(W_j) -\lambda_n(W_{S\setminus j })\\
 \leq & \max_{j \in V}\lambda_1(W_j).
\end{align*}
Putting the above two inequalities together, we have
\begin{align*}
\frac{\rho_j(S\setminus j \; \cup \Omega) }{ \rho_j(S\setminus j)} \geq \frac{ \min_{\omega \in V} \lambda_n(W_\omega)}{ \max_{\omega \in V} \lambda_1(W_\omega)}, \ \   \forall S, \Omega \subset V, j \in S \setminus \Omega.  
\end{align*}
Hence, we obtain the claimed lowerbound for the curvature
\begin{align*}
\alpha \leq 1-\frac{ \min_{\omega \in V} \lambda_n(W_\omega)}{ \max_{\omega \in V} \lambda_1(W_\omega)}.
\end{align*}
\end{proof}

\textbf{Remark:} The above bound is useful in deriving performance guarantees via Theorem \ref{nonsubmodthm} only if $\min_{\omega \in V} \lambda_{min}(W_\omega) > 0$. This condition is equivalent to requiring that each actuator results in controllability of the network - an unreasonable requirement for large-scale sparse networks. A potential  approach to get around this issue  is to assume a set of existing actuators $B_0 \in \R^{n \times m}$ render the system controllable and the goal is to choose additional actuators to improve the controllability. Unfortunately, even with this assumption we face the same difficulty. In particular, note that in this case, the Gramian would be $\bar{W}_S = W_{B_0 \cup S}$. Hence, $\bar{W}_{S \cup \omega} = \bar{W}_S + W_{\omega}$ and $\lambda_n(\bar{W}_{S \cup \omega} ) - \lambda_n(\bar{W}_S) \geq \lambda_n(W_\omega)$. Since $W_\omega$ does not contain $B_0$ in general, its minimum eigenvalue would be zero and the same problem arises?. The bound  in the recent work of \cite{gupta2018approximate} on the closeness of the minimum eigenvalue of the Gramian to a submodular function, seems to also suffer from the same drawback. Despite this theoretical limitation, our empirical estimates of the submodularity ratio and curvature, computed in the numerical section, for all  problem instances considered were bounded (far) away from zero and one, respectively. 
As we will shortly see, in the metric described by inverse of the trace of the Gramian, we can avoid vacuous bounds by including $B_0$ corresponding to a set of base actuators ensuring controllability. 

\subsection{Trace of inverse of Gramian}
The function $-\tr({W_S}^{-1})$ is not well-defined if $W_S$ is not invertible. To avoid this issue,  we  assume that an existing set of actuators corresponding to $B_0$ provides controllability (considered also implicitly in \cite{tzoumas2016minimal,gupta2018approximate}). Hence, our input set is given by $B_S = [B_0, b_{s_1}, ..., b_{s_{|S|}}] \in \mathbf{R}^{n \times (m + |S|)}$. We use $\bar{W}_S$ for the Gramian corresponding to the augmented set. 

\begin{prop} \label{traceinvbound}
The set function $f(S)=-\tr({\bar{W}_S}^{-1})$ is monotone nondecreasing. Its  submodularity ratio $\gamma$, and  curvature $\alpha$, are bounded by
\begin{align}
\label{trace_lambda}
1 > \gamma \geq \frac{\min_{\omega \in V} \tr({W}_\omega)\big(\min_{\omega \in V}\lambda_n(  \bar{W}_\omega)\big)^2}{\max_{\omega \in V} \tr(W_\omega)\big(\lambda_1(  \bar{W}_V)\big)^2} > 0, \\
\label{trace_alpha}
0 < \alpha \leq 1-\frac{\min_{\omega \in V} \tr(W_\omega)\big( \min_{\omega \in V}\lambda_n(  \bar{W}_\omega)\big)^2}{\max_{\omega \in V} \tr(W_\omega)\big(\ \lambda_1(  \bar{W}_V)\big)^2} < 1.
\end{align}
\end{prop}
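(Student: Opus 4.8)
The plan is to mirror the structure of the proof of Proposition~\ref{lambda_min}: first establish monotonicity, then derive matched per-element upper and lower bounds on the marginal gains $\rho_\omega(S)$, and finally assemble these into the submodularity-ratio and curvature estimates by dividing a lower bound on the left-hand side of \eqref{def:gamma} (respectively \eqref{def:alpha}) by an upper bound on the right-hand side. Monotonicity is immediate: from the additive law $\bar{W}_{S\cup\omega}=\bar{W}_S+W_\omega\succeq\bar{W}_S$ and antitonicity of inversion on $\m{S}^n_{++}$ we get $\bar{W}_{S\cup\omega}^{-1}\preceq\bar{W}_S^{-1}$, so each marginal $\rho_\omega(S)=\tr(\bar{W}_S^{-1})-\tr(\bar{W}_{S\cup\omega}^{-1})\ge 0$ and $f$ is nondecreasing.

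The core is to bound a single marginal $\rho_\omega(S)=\tr(\bar{W}_S^{-1}-\bar{W}_{S\cup\omega}^{-1})$ from both sides. I would rewrite the inner matrix via two congruences. Setting $E=\bar{W}_S^{-1/2}W_\omega\bar{W}_S^{-1/2}\succeq 0$ gives $\bar{W}_S^{-1}-\bar{W}_{S\cup\omega}^{-1}=\bar{W}_S^{-1/2}\,E(I+E)^{-1}\,\bar{W}_S^{-1/2}$, and the scalar inequality $\tfrac{t}{1+t}\le t$ yields $E(I+E)^{-1}\preceq E$, hence the upper bound $\rho_\omega(S)\le\tr(\bar{W}_S^{-2}W_\omega)\le \tr(W_\omega)/\lambda_n(\bar{W}_S)^2$. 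Symmetrically, setting $G=\bar{W}_{S\cup\omega}^{-1/2}W_\omega\bar{W}_{S\cup\omega}^{-1/2}$ (so $0\preceq G\preceq I$) gives $\bar{W}_S^{-1}-\bar{W}_{S\cup\omega}^{-1}=\bar{W}_{S\cup\omega}^{-1/2}\,G(I-G)^{-1}\,\bar{W}_{S\cup\omega}^{-1/2}$, and now $\tfrac{g}{1-g}\ge g$ gives $G(I-G)^{-1}\succeq G$, hence the lower bound $\rho_\omega(S)\ge\tr(\bar{W}_{S\cup\omega}^{-2}W_\omega)\ge \tr(W_\omega)/\lambda_1(\bar{W}_{S\cup\omega})^2\ge \tr(W_\omega)/\lambda_1(\bar{W}_V)^2$. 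Both steps use the elementary trace inequality $\lambda_n(Q)\tr(P)\le\tr(PQ)\le\lambda_1(Q)\tr(P)$ for $P\succeq 0$ and $Q$ symmetric, together with $\bar{W}_S,\bar{W}_{S\cup\omega}\preceq\bar{W}_V$.

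For the assembly, summing the lower bound over $\omega\in\Omega\setminus S$ gives $\sum_{\omega}\rho_\omega(S)\ge |\Omega\setminus S|\min_{\omega}\tr(W_\omega)/\lambda_1(\bar{W}_V)^2$. Applying the upper bound with the aggregate block $W_{\Omega\setminus S}=\sum_{\omega\in\Omega\setminus S}W_\omega$ in place of $W_\omega$ gives $\rho_\Omega(S)\le \tr(W_{\Omega\setminus S})/\lambda_n(\bar{W}_S)^2\le |\Omega\setminus S|\max_{\omega}\tr(W_\omega)/(\min_{\omega}\lambda_n(\bar{W}_\omega))^2$, where for $S\neq\emptyset$ monotonicity gives $\lambda_n(\bar{W}_S)\ge\min_{\omega}\lambda_n(\bar{W}_\omega)$ (since $\bar{W}_S\succeq\bar{W}_{\omega_0}$ for any $\omega_0\in S$). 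Dividing, the factors $|\Omega\setminus S|$ cancel and one reads off exactly the claimed $\gamma$. The curvature bound follows identically: the lower bound on $\rho_j(S\setminus j\cup\Omega)$ and the upper bound on $\rho_j(S\setminus j)$ share the common factor $\tr(W_j)$, and bounding it below by $\min_\omega\tr(W_\omega)$ in the numerator and above by $\max_\omega\tr(W_\omega)$ in the denominator reproduces the stated $1-\alpha$ (indeed, cancelling $\tr(W_j)$ yields the sharper $\alpha\le 1-(\min_\omega\lambda_n(\bar{W}_\omega))^2/\lambda_1(\bar{W}_V)^2$). Positivity of $\min_\omega\tr(W_\omega)$ and of $\min_\omega\lambda_n(\bar{W}_\omega)$—the latter guaranteed by controllability under $B_0$—together with finiteness of $\lambda_1(\bar{W}_V)$ force $\gamma>0$ and $\alpha<1$, while $\gamma<1$ and $\alpha>0$ record the established failure of sub-/supermodularity.

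The delicate point I expect to be the main obstacle is the \emph{lower} bound on each marginal. The naive route of bounding $\rho_\omega(S)=\tr(\bar{W}_S^{-1}W_\omega\bar{W}_{S\cup\omega}^{-1})$ below by $\lambda_n(\bar{W}_S^{-1})\lambda_n(\bar{W}_{S\cup\omega}^{-1})\tr(W_\omega)$ is \emph{invalid}: it implicitly asserts $\lambda_n\big(\tfrac12(\bar{W}_S^{-1}\bar{W}_{S\cup\omega}^{-1}+\bar{W}_{S\cup\omega}^{-1}\bar{W}_S^{-1})\big)\ge\lambda_n(\bar{W}_S^{-1})\lambda_n(\bar{W}_{S\cup\omega}^{-1})$, yet the symmetrized product of two positive definite matrices can fail to be even positive semidefinite when one dominates the other in the Loewner order, so this per-term lower bound can turn negative. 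The congruence-with-$\bar{W}_{S\cup\omega}$ identity combined with $\tfrac{g}{1-g}\ge g$ is precisely what repairs this, converting the lower bound into a genuine Loewner inequality that survives taking traces. A secondary subtlety is the case $S=\emptyset$, where $\lambda_n(\bar{W}_\emptyset)=\lambda_n(W_{B_0})$ may be strictly smaller than $\min_\omega\lambda_n(\bar{W}_\omega)$; this is handled by reading $\min_\omega\lambda_n(\bar{W}_\omega)$ as the uniform lower bound $\lambda_n(\bar{W}_\emptyset)$ on $\lambda_n(\bar{W}_S)$ that is valid for every $S$.
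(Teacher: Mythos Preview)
Your proof is correct and follows the same overall architecture as the paper's---derive matched per-marginal upper and lower bounds on $\rho_\omega(S)$, then assemble them into the submodularity-ratio and curvature estimates---but the mechanism for obtaining those bounds is genuinely different. The paper expands $\tr(\bar W_S^{-1})-\tr(\bar W_{S\cup\omega}^{-1})=\sum_i\bigl(\lambda_i(\bar W_S)^{-1}-\lambda_i(\bar W_{S\cup\omega})^{-1}\bigr)$ eigenvalue-by-eigenvalue, uses Weyl's inequality to ensure each difference $\lambda_i(\bar W_{S\cup\omega})-\lambda_i(\bar W_S)\ge 0$, and then bounds each denominator $\lambda_i(\bar W_{S\cup\omega})\lambda_i(\bar W_S)$ uniformly above by $\lambda_1(\bar W_V)^2$ and below by $(\min_\omega\lambda_n(\bar W_\omega))^2$; summing the numerators recovers $\tr(W_\omega)$. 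Your congruence identities together with $E(I+E)^{-1}\preceq E$ and $G(I-G)^{-1}\succeq G$ reach the \emph{same} two-sided estimate $\tr(W_\omega)/\lambda_1(\bar W_V)^2\le\rho_\omega(S)\le\tr(W_\omega)/\lambda_n(\bar W_S)^2$, but as Loewner inequalities on the matrix $\bar W_S^{-1}-\bar W_{S\cup\omega}^{-1}$ rather than termwise scalar bounds. What your route buys is a cleaner derivation that avoids pairing eigenvalues across two matrices, the sharper curvature estimate after cancelling the common factor $\tr(W_j)$, and an explicit diagnosis of why the naive product bound $\tr(\bar W_S^{-1}W_\omega\bar W_{S\cup\omega}^{-1})\ge\lambda_n(\bar W_S^{-1})\lambda_n(\bar W_{S\cup\omega}^{-1})\tr(W_\omega)$ fails. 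Both arguments share the $S=\emptyset$ (respectively $S\setminus j=\emptyset$) subtlety you flag---the paper glosses over it---and your proposed reading of the constant as $\lambda_n(\bar W_\emptyset)=\lambda_n(W_{B_0})$ is the honest patch.
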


\begin{proof}
First, we show that $f(S)$ is nondecreasing. For all $\omega \in V, S \subset V$, we have
\begin{align}
\nonumber
&f(S \cup \{\omega\})- f(S)= -\tr\big(({\bar{W}_{S\cup \omega}})^{-1}\big) + \tr\big((\bar{W}_S)^{-1}\big)\\
\nonumber
=&\sum_{i=1}^n -\lambda_i\big((\bar{W}_{S\cup \omega})^{-1}\big) + \lambda_i\big((\bar{W}_S)^{-1}\big)\\
\nonumber
=&\sum_{i=1}^n \frac{-1}{\lambda_i\big(\bar{W}_{S\cup \omega}\big)} + \frac{1}{\lambda_{i}\big(\bar{W}_S\big)}\\
\nonumber
=&\sum_{i=1}^n \frac{  \lambda_i(  \bar{W}_{S\cup \omega})-\lambda_{i}( \bar{W}_S) }{\lambda_i(  \bar{W}_{S\cup \omega})\lambda_{i}( \bar{W}_S)} \\
\nonumber
\geq&\frac{\sum_{i=1}^n  \lambda_i(  \bar{W}_{S\cup \omega})-\lambda_{i}( \bar{W}_S)}{\lambda_1(  \bar{W}_V)\lambda_{1}( \bar{W}_V)} \\
\label{eq:bound_monotone}
=& \frac{\tr(\bar{W}_{S\cup \omega})-\tr( \bar{W}_S) }{\big( \lambda_1(  \bar{W}_V)\big)^2} \geq 0.
\end{align}

To derive a lowerbound for the submodularity ratio \eqref{def:gamma}, we first derive a lowerbound for the left-hand-side in \eqref{def:gamma}.
\begin{align}
\nonumber
&\sum_{\omega \in \Omega \setminus S} f(S \cup \{\omega\})- f(S)\\
\nonumber
=&\sum_{\omega \in \Omega \setminus S}  -\tr\big((\bar{W}_{S\cup \omega})^{-1}\big) + \tr\big((\bar{W}_S)^{-1}\big) \\
\nonumber
\geq&\sum_{\omega \in \Omega \setminus S} \frac{\tr(\bar{W}_{S\cup \omega})-\tr( \bar{W}_S) }{\big( \lambda_1(  \bar{W}_V)\big)^2} \\
\label{lowerbound_trace}
\geq& | \Omega \setminus S| \frac{\min_{\omega \in V} \tr(W_\omega)}{\big(\lambda_1(  \bar{W}_V)\big)^2}.
\end{align}
To get the second inequality above we summed \eqref{eq:bound_monotone}  over $\omega \in \Omega \setminus S$. The last inequality trivially follows.  Notice that the numerator above is greater than zero since the Gramian has at least one positive eigenvalue. Next, we upper  bound the right-hand-side in \eqref{def:gamma}.
\begin{align}
\nonumber
&f(S \cup \{\Omega\})- f(S)\\
\nonumber
=& -\tr\big(( \bar{W}_{S\cup \Omega})^{-1}\big) + \tr\big(( \bar{W}_S)^{-1}\big) \\
\nonumber
=& \sum_{i=1}^n \frac{-1}{\lambda_{i}\big(\bar{W}_{S\cup \Omega}\big)} + \frac{1}{\lambda_{i}\big( \bar{W}_S\big)}\\
\nonumber
=&\sum_{i=1}^n \frac{\lambda_{i}(  \bar{W}_{S\cup \Omega})-\lambda_{i}( \bar{W}_S) }{\lambda_i(  \bar{W}_{S\cup \Omega})\lambda_{i}( \bar{W}_S)} \\
\nonumber
\leq&\frac{\sum_{i=1}^n  \big(\lambda_i(  \bar{W}_{S\cup \Omega})-\lambda_{i}( \bar{W}_S) \big)}{\big(\min_{\omega \in V}\lambda_n(  \bar{W}_\omega)\big) \big( \min_{\omega \in V}\lambda_n(  \bar{W}_\omega)\big)} \\
\nonumber
=& \frac{\tr( \bar{W}_{S\cup \Omega})-\tr( \bar{W}_S) }{\big(\ \min_{\omega \in V}\lambda_n(  \bar{W}_\omega)\big)^2} =\sum_{\omega \in \Omega \setminus S} \frac{\tr({W}_{\omega})}{\big( \min_{\omega \in V}\lambda_n(  \bar{W}_\omega)\big)^2} \\
\label{upperbound_trace}
\leq& | \Omega \setminus S| \frac{\max_{\omega \in V} \tr(W_\omega)}{\big( \min_{\omega \in V}\lambda_n(  \bar{W}_\omega)\big)^2} .
\end{align}
Notice that the denominator above is greater than zero since  the assumption of having a set of existing actuators providing controllability, implies $\bar{W}_{\emptyset} = W_{B_0} $ is positive definite. Hence,   $\lambda_n(\bar{W}_\omega) = \lambda_n(W_{B_0 \cup \omega})> 0$ for all $\omega \in V$ in the second to last inequality above. 
Putting  inequalities \eqref{lowerbound_trace} and \eqref{upperbound_trace} we obtain the positive lowerbound on $\gamma$ as in \eqref{trace_lambda}.

Our technique for deriving a bound on curvature is very similar to that of the submodularity ratio. In particular, we first derive a lowerbound for the left-hand-side in \eqref{def:alpha}.
\begin{align*}
& \rho_j(S\setminus j \; \cup \Omega) =f(S \cup \Omega)- f(S\setminus j \;  \cup \Omega)\\
=& -\tr\big(( \bar{W}_{S\cup \Omega})^{-1}\big) + \tr\big((\bar{W}_{S\setminus j\;  \cup \Omega})^{-1}\big) \\
=&\sum_{i=1}^n \frac{-1}{\lambda_i\big(\bar{W}_{S\cup \Omega}\big)} + \frac{1}{\lambda_{i}\big(\bar{W}_{S\setminus j\;  \cup \Omega}\big)}\\
=&\sum_{i=1}^n \frac{\big(\lambda_i(  \bar{W}_{S\cup \Omega})-\lambda_{i}( \bar{W}_{S\setminus j\;  \cup \Omega}) \big)}{ \lambda_i(  \bar{W}_{S\cup \Omega})\lambda_{i}( \bar{W}_{S\setminus j\;  \cup \Omega})} \\
\geq& \frac{\sum_{i=1}^n  \big(\lambda_i(  \bar{W}_{S\cup \Omega})-\lambda_{i}( \bar{W}_{S\setminus j\;  \cup \Omega}) \big)}{ \lambda_1(  \bar{W}_V)\lambda_{1}( \bar{W}_V)} \\
=& \frac{\tr(\bar{W}_{S\cup \Omega})-\tr( \bar{W}_{S\setminus j\;  \cup \Omega}) }{\big( \lambda_1(  \bar{W}_V)\big)^2}\\
 = &\frac{\tr({W}_{j})}{\big( \lambda_1(  \bar{W}_V)\big)^2}
 \geq  \frac{\min_{\omega \in V} \tr(W_\omega)}{\big( \lambda_1(  \bar{W}_V)\big)^2}.
\end{align*}
Next, we upper  bound the right-hand-side in \eqref{def:alpha} as follows.
\begin{align*}
&\rho_j(S\setminus j)=f(S)-f(S\setminus j)\\
=&-\tr\big((\bar{W}_{S})^{-1}\big) + \tr\big((\bar{W}_{S\setminus j})^{-1}\big) \\
=& \sum_{i=1}^n \frac{-1}{\lambda_{i}( \bar{W}_{S})} + \frac{1}{\lambda_{i}(\bar{W}_{S\setminus j})}\\
=&\sum_{i=1}^n \frac{\lambda_{i}(  \bar{W}_{S})-\lambda_{i}( \bar{W}_{S\setminus j})}{\ \lambda_i(  \bar{W}_{S})\lambda_{i}( \bar{W}_{S\setminus j})} \\
\leq& \frac{\sum_{i=1}^n \lambda_i(  \bar{W}_{S})-\lambda_{i}( \bar{W}_{S\setminus j}) }{\big(\min_{\omega \in V}\lambda_n(  \bar{W}_\omega)\big)^2}  
= \frac{\tr( \bar{W}_{S})-\tr( \bar{W}_{S\setminus j}) }{\big( \min_{\omega \in V}\lambda_n(  \bar{W}_\omega)\big)^2}\\
 =&\frac{\tr(  W_{j})}{\big( \min_{\omega \in V}\lambda_n(  \bar{W}_\omega)\big)^2}
 \leq  \frac{\max_{\omega \in V} \tr(W_\omega)}{\big( \min_{\omega \in V}\lambda_n(  \bar{W}_\omega)\big)^2} .
\end{align*}
Putting the above  lowerbound and upperbound together, from the curvature definition in \eqref{curvature}  we obtain the upperbound on the curvature given in \eqref{trace_alpha}. 
\end{proof}

Since $f(\emptyset) =-\tr( (\bar{W}_\emptyset)^{-1}) = -\tr( (W_{B_0})^{-1}) \neq 0$, $f$ is not normalized. This offset appears in the suboptimality gap of the greedy algorithm through Theorem \ref{nonsubmodthm} as follows. 
\begin{cor}
\label{cor:non_norm}
Consider the function $f(S) = -\tr((\bar{W}_S)^{-1})$ with  submodularity ratio $\gamma$ and curvature  $\alpha$. 
$$f(S_{greedy}) -f(\emptyset)  \geq \frac{1}{\alpha} \left( 1 - e^{-\alpha \gamma} \right) \big(f^* - f(\emptyset)\big).$$
\end{cor}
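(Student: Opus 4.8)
The plan is to reduce the corollary to Theorem~\ref{nonsubmodthm} by a constant shift that normalizes $f$. Since $f(S) = -\tr((\bar{W}_S)^{-1}) < 0$ and $f(\emptyset) = -\tr((W_{B_0})^{-1}) \neq 0$, the function $f$ is neither nonnegative nor normalized, so the theorem does not apply directly. I would therefore introduce the shifted set function
$$ g(S) := f(S) - f(\emptyset), $$
and verify that $g$ satisfies all hypotheses of Theorem~\ref{nonsubmodthm} with the same submodularity ratio $\gamma$ and curvature $\alpha$ as $f$.

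First I would check the three structural properties. Normalization is immediate: $g(\emptyset) = f(\emptyset) - f(\emptyset) = 0$. Nonnegativity follows from the monotonicity of $f$ established in Proposition~\ref{traceinvbound}: since $\emptyset \subseteq S$ for every $S \subseteq V$, repeated application of the nondecreasing property gives $f(S) \geq f(\emptyset)$, hence $g(S) \geq 0$. Monotonicity of $g$ is inherited from $f$ because the two functions differ by a constant. The \emph{key} observation is that every marginal benefit is unaffected by the shift,
$$ \rho^g_A(B) = g(A \cup B) - g(B) = f(A \cup B) - f(B) = \rho^f_A(B), $$
so the defining inequalities \eqref{def:gamma} and \eqref{def:alpha} for $g$ coincide exactly with those for $f$. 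Consequently $g$ has the same submodularity ratio $\gamma$ and curvature $\alpha$, and the bounds from Proposition~\ref{traceinvbound} carry over verbatim.

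It remains to apply Theorem~\ref{nonsubmodthm} to $g$ and translate the conclusion back to $f$. Because the greedy selection rule in Algorithm~\ref{algorithm:greedy} depends only on marginal gains $f(S \cup \{e\}) - f(S) = g(S \cup \{e\}) - g(S)$, the greedy algorithm run on $g$ produces the identical set $S_{greedy}$ as on $f$. Writing $g^* := \max_{|S| \leq k} g(S) = f^* - f(\emptyset)$, Theorem~\ref{nonsubmodthm} yields
$$ g(S_{greedy}) \geq \frac{1}{\alpha}\left(1 - e^{-\alpha\gamma}\right) g^*, $$
which, upon substituting $g(S_{greedy}) = f(S_{greedy}) - f(\emptyset)$ and $g^* = f^* - f(\emptyset)$, is precisely the claimed bound. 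The argument is entirely elementary; the only point requiring care -- and the conceptual crux -- is recognizing that the submodularity ratio, the curvature, and the greedy trajectory are all invariant under adding a constant to the objective, so that the quantitative guarantee transfers without any loss other than the additive offset $f(\emptyset)$ appearing on both sides.
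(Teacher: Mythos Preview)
Your proposal is correct and follows exactly the same route as the paper: define the shifted function $\bar f(S)=f(S)-f(\emptyset)$, observe that it is normalized, nonnegative, nondecreasing, and that the submodularity ratio, curvature, and greedy trajectory are unchanged by the constant shift, then apply Theorem~\ref{nonsubmodthm} and translate back. If anything, you spell out the invariance of the marginals and the greedy set more explicitly than the paper does.
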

\begin{proof}
Consider the  function $\bar{f}(S) =f(S)- f(\emptyset) = - \tr( \bar{W}_{S}^{-1}) + \tr(W_{B_0}^{-1})$. It is normalized, i.e. $\bar{f}(\emptyset) = 0$.  From Proposition \ref{traceinvbound}, $f(S)$ is monotone nondecreasing and this implies $\bar{f}$ is nonnegative monotone nondecreasing. Since the submodularity ratio and the curvature remain invariant if a constant term is added to the function $f$, these parameters for $\bar{f}$ are equivalent to those of $f$. Hence, applying Theorem \ref{nonsubmodthm} to $\bar{f}$ gives the claim of the Corollary.
\end{proof}

\textbf{Remark:} 
You might observe that in the considered instances of deriving the upperbound on the curvature, $\alpha^u$, we obtained  $\alpha^u = 1-\gamma^l$, where $\gamma^l$ was our lowerbound on the submodularity ratio. This connection is due to the (conservative) approach in deriving these bounds. Such connection does not exist for the true values of the submodularity ratio and the curvature in general. In particular, for a submodular function $\gamma = 1$. This clearly would not imply that $f$'s curvature is $\alpha \leq 1-1 =0$; otherwise, $f$ would also be supermodular and hence, modular.  

The above bounds are valid for any problem instance -- any stable network dynamics matrix  $A$ and  set of possible actuator locations $B_S$. Hence, these bounds are often conservative as will be seen in the next section. Nevertheless, their existence is promising as it supports empirical observations about effectiveness of the greedy algorithm  for optimizing non-submodular set functions related to the Gramian.


\section{Case Studies}
\begin{figure}[t]
\begin{center}
\includegraphics[scale=0.36]{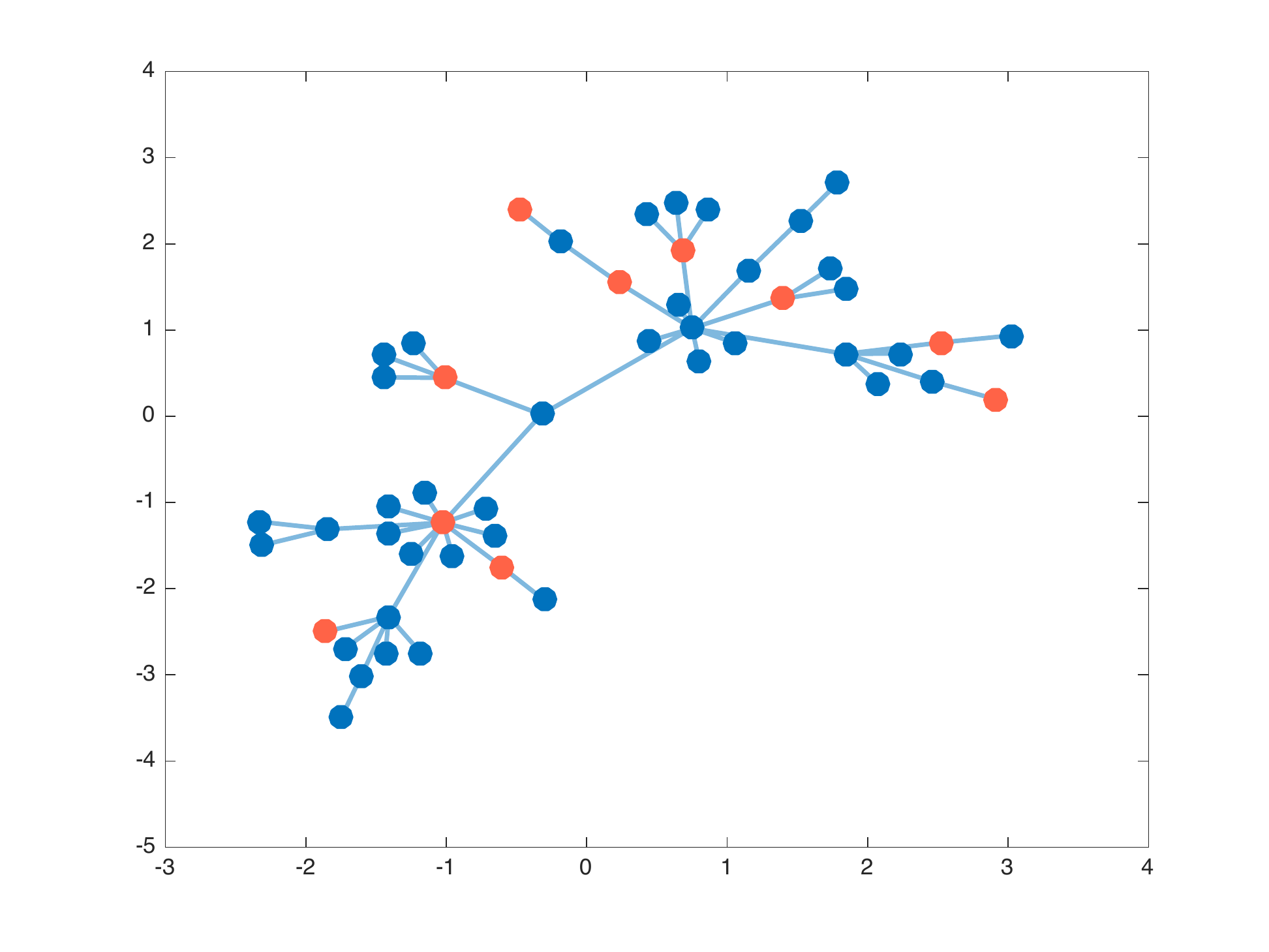}
\end{center}
\caption{Barab{\'a}si-Albert network on $n=50$ nodes, with $k=10$ nodes selected using the greedy algorithm with the Gramian trace inverse metric to receive control inputs (shown in red).} 
\label{fig:barabasi}
\end{figure}

\begin{figure}[t]
\begin{center}
\includegraphics[scale=0.36]{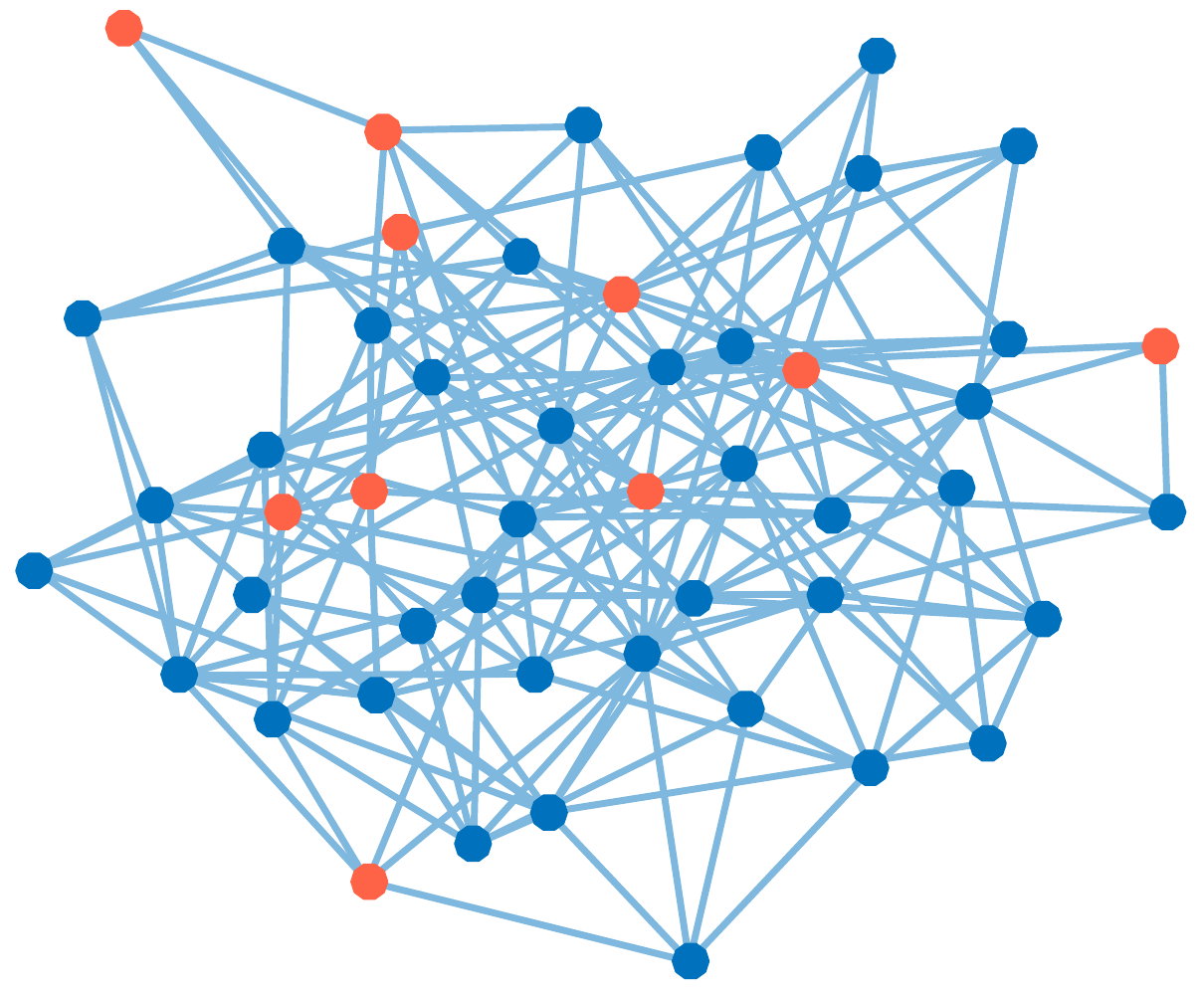}
\end{center}
\caption{Erd{\H{o}}s-R{\'e}nyi network on $n=50$ nodes, with $k=10$ nodes selected using the greedy algorithm with the Gramian trace inverse metric to receive control inputs (shown in red).} 
\label{fig:erdos}
\end{figure}

We present several illustrative numerical examples in random networks. In all examples, we first generate unweighted random graphs whose structure defines non-zero entries in the network dynamics matrix as discussed below. We then randomly generate edge weights associated with non-zero entries by drawing independently from a standard normal distribution. Finally, we  shift the matrix so that it is stable, with the smallest eigenvalue(s) having real part $-0.05$. 
Figure \ref{fig:barabasi} shows an instance of a Barab{\'a}si-Albert network, whose edge structure is generated with a preferential attachment mechanism that produces power law degree distributions \cite{barabasi1999emergence}. This network connectivity is motivated by link formations in social networks. Figure \ref{fig:erdos} shows an instance of an Erd{\H{o}}s-R{\'e}nyi network, with the edge probability chosen uniformly for all nodes to be 0.08  (just above the critical value of $\ln(50)/50$ to ensure connectivity of the network \cite{erdHos1959random}). In both cases, there are $n=50$ states and $k=10$ possible inputs he selection of $k=10$ inputs. We assume that an input signal could be injected into any state node, i.e., the set V corresponds to the standard basis in $\mathbf{R}^n$. The selected nodes using the greedy algorithm to minimize the trace of the controllability Gramian inverse are shown in Figures \ref{fig:barabasi}-\ref{fig:erdos}. 

In  the above problems, computing exact bounds for the submodularity ratio and curvature are prohibitive due to the network sizes. We empirically estimated  the corresponding submodularity ratio and curvature by randomly generating the subsets in Definitions \ref{sratio} and \ref{curvature}. In particular, for each set of sampled subsets, we determined the respective largest and smallest values of $\gamma$ and $\alpha$ so that the inequalities in these definitions are satisfied. We never encountered sets that violated the submodularity inequality, indicating that although the set functions are not submodular in general, they may often be close to submodular empirically in typical instances. Similarly, the curvature ratios varied between zero and one, but on average were very close to the ideal value of zero. Representative results based on 5000 subset samples for each network type with $n=50$ using the trace inverse metric are shown in Table \ref{sample-table}. Of course the theoretical bounds provide the only hard performance guarantees, but these empirical values may be stronger indicators of empirical performance of the greedy algorithm. In all examples, to find a Gramian whose inverse is well-defined, we included a small identity matrix in the Lyapunov equation (i.e., we solved $AW_S + W_SA^T + B_S B_S^T + \epsilon I$). This is consistent with the assumption needed for deriving the bounds, namely, having  a set of existing actuators that provide controllability. 

To provide evidence supporting the empirical effectiveness of the greedy algorithm despite lack of submodularity of minimum eigenvalue and trace inverse Gramian, we compared greedy results with globally optimal results for problems small enough to allow brute force search. We randomly generated 500 instances each for several types of networks, including random stable (via Matlab's rss function), Erd{\H{o}}s-R{\'e}nyi , and Barab{\'a}si-Albert, with $n=16$ and $k=4$. We find that on average the greedy algorithm achieves over 90\% of the globally optimal value across all networks for both the minimum eigenvalue and trace inverse metrics, and in many cases recovers a globally optimal actuator selection. This significantly outperforms even the worst-case guarantee for submodular functions (of $\sim 63 \%$), and supports the boosts suggested by the empirical estimates of the submodularity ratio and curvature in Table 1. Note that for a submodular function $f$, a small curvature improves the performance guarantee of the greedy algorithm via Theorem \ref{nonsubmodthm}, e.g. $\alpha = 0.01$ would ensure $99.5 \%$  optimality. ). An interesting future work is to understand probability of these metrics being submodular on certain classes of random graphs. 

\begin{table}[t]
\caption{Empirical estimates of submodularity ratio and curvature for various networks, based on 5000 random subset pairs from Definitions \ref{sratio} and \ref{curvature}.}
\label{sample-table}
\vskip 0.15in
\begin{center}
\begin{small}
\begin{sc}
\begin{tabular}{lcccr}
\toprule
Network & $\gamma_{\text{emp}}$ & $\alpha_{\text{emp}}$ (range, average) \\
\midrule
Erd{\H{o}}s-R{\'e}nyi    & 1 & [0, 0.72], \ 0.010  \\
Barab{\'a}si-Albert & 1 & [0, 0.66], \ 0.009 \\
L-shaped mesh    & 1 & [0, 0.99], \ 0.007 \\
\bottomrule
\end{tabular}
\end{sc}
\end{small}
\end{center}
\vskip -0.1in
\end{table}

A difficulty in  implementing the algorithm as well as evaluating and interpreting the bounds and empirical estimates of submodularity ratio and curvature is that even for these moderately sized networks, the Gramian usually has several very small eigenvalues corresponding to state space directions that require large input energy to achieve state transfer. This observation is  theoretically supported by the fundamental limitations discussed \cite{Pasqualetti2014c}. The pseudo-inverse or minimum non-zero eigenvalue can be used as alternatives, but these values are highly sensitive to an arbitrary threshold defining which eigenvalues are considered numerically zero. Appropriately deciding a controllability metric for large networks requires consideration of the application context.


\section{Conclusions}
We  derived bounds on the submodularity ratio and curvature for two important non-submodular set functions related to the controllability Gramian. These bounds justify the use of the greedy approach beyond a heuristic, for large-scale network design problems corresponding to the Gramian. In  simulations, we observed that the bounds derived might be very conservative. We are currently investigating reducing this conservatism for specific classes of networks. A major assumption in our work is that a base set of actuators for controllability exist.  Currently, we are working on deriving alternative formulations of the problem that bypass this restrictive assumption, similar in spirit to \cite{tzoumas2016minimal} and  can also provide a meaningful metric in the cases where several eigenvalues of the Gramian are near zero as proven in \cite{Pasqualetti2014c}.

\bibliographystyle{IEEEtran}
\bibliography{refs.bib}

\end{document}